\documentclass[11pt,reqno,letterpaper]{amsart}

\usepackage{amssymb}
\usepackage{amsmath}
\usepackage{amsthm}
\usepackage{bbm}
\usepackage{mathrsfs}
\usepackage{doi}
\usepackage{graphicx}

\usepackage{bookmark}
\usepackage{hyperref}
\hypersetup{pdfstartview={FitH}}

\numberwithin{equation}{section}

\newtheorem{theorem}{Theorem}
\newtheorem{lemma}[theorem]{Lemma}

\newtheorem{proposition}[theorem]{Proposition}
\newtheorem{conjecture}[theorem]{Conjecture}
\newtheorem{observation}[theorem]{Observation}

\theoremstyle{definition}

\renewcommand{\leq}{\leqslant}
\renewcommand{\geq}{\geqslant}
\renewcommand{\le}{\leqslant}
\renewcommand{\ge}{\geqslant}

\newcommand{\R}{\mathbb{R}}

\newcommand{\Z}{\mathbb{Z}}

\begin{document}

\title[Sharp variational inequalities on finite graphs]{Sharp variational inequalities for the Hardy--Littlewood maximal operator on finite undirected graphs}

\author[C. Gonz\'{a}lez-Riquelme]{Cristian Gonz\'{a}lez-Riquelme}
\address[C.\,G-R.]{Centre de Recerca Matemàtica, Edifici C, Campus Bellaterra, 08193 Bellaterra, Spain}
\email{cgonzalez@crm.cat}

\author[V. Kova\v{c}]{Vjekoslav Kova\v{c}}
\address[V.\,K.]{Department of Mathematics, Faculty of Science, University of Zagreb, Bijeni\v{c}ka cesta 30, 10000 Zagreb, Croatia}
\email{vjekovac@math.hr}

\author[J. Madrid]{Jos\'e Madrid}
\address[J.\,M.]{Department of Mathematics, Virginia Polytechnic Institute and State University,  225 Stanger Street, Blacksburg, VA 24061-1026, USA
}
\email{josemadrid@vt.edu}

\subjclass[2020]{Primary 
26A45, 
42B25; 
Secondary 
39A12, 
05C12} 


\begin{abstract}
We study sharp $p$-variational inequalities for the Hardy--Littlewood maximal operator on complete graphs, answering in the affirmative a question by Feng Liu and Qingying Xue. We also use computational assistance to find sharp constants in $1$-variational inequalities for all connected graphs on at most five vertices and pose a conjecture on the corresponding sharp constants for path graphs. Finally, we construct finite graphs with arbitrarily large $p$-variational constants.
\end{abstract}

\maketitle


\section{Introduction}
The centered Hardy--Littlewood maximal function of $f \in \textup{L}^1_{\textup{loc}}(\mathbb{R}^d)$ is defined by 
\begin{equation*}
    Mf(x) = \sup_{r>0} \frac{1}{\lambda(\textup{B}(x,r))} \int_{\textup{B}(x,r)} |f(y)| \,\textup{d}y,
\end{equation*}
where $\lambda$ denotes the Lebesgue measure in $\mathbb{R}^d$ and $\textup{B}(x,r)$ stands for the Euclidean open ball centered at $x$ with radius $r$.
The {Hardy--Littlewood maximal operator} is one of the fundamental objects in harmonic analysis, and its mapping properties have been studied for more than a century. Beyond the classical inequalities on $\textup{L}^p$-spaces, in the Euclidean setting, a seminal result of Kinnunen \cite{Kinnunen1997} showed that the Hardy--Littlewood maximal operator is bounded on Sobolev spaces $W^{1,p}(\mathbb{R}^d)$ for $1<p\le\infty$, initiating a systematic study at the level of derivatives. Similar endpoint (the case $p=1$) and variation-type questions have since driven a substantial part of the literature, both in the continuous and discrete settings; see \cite{CarneiroSurvey} and the references therein. In particular, Kurka \cite{Kurka2015} proved
\begin{equation}\label{eq:kurka}
\textup{Var} M f \leq C \,\textup{Var} f,
\end{equation}
for any function $f:\R\to\R$ of bounded variation. Adapting Kurka's argument, an analogous result was obtained in the discrete setting by Temur \cite{Temur2017}, who showed that \eqref{eq:kurka} also holds for any function $f:\Z\to\R$ of bounded (discrete) variation. 
While it is a well-known open problem to determine if \eqref{eq:kurka} also holds with the optimal constant $C=1$, similar sharp bounds for the uncentered Hardy--Littlewood maximal operator were obtained by Aldaz and P\'{e}rez L\'{a}zaro \cite{AldazPerez2007} (in the continuous setting) and Bober, Carneiro, Hughes, and Pierce \cite{BoberCarneiroHughesPierce2012} (in the discrete setting).

One can regard $\Z$ as an infinite undirected graph, where only consecutive integers are connected with an edge.
This way, the aforementioned topic naturally leads to the study of the inequality \eqref{eq:kurka} on more general graphs. 
Let $G=(V,E)$ be a locally finite connected undirected graph.
The \emph{ball} around $v$ of radius $r$ is defined as
\[ \textup{B}_G(v,r) := \{ w\in V : d_G(v,w)\leq r \},\quad v\in V, \ r\geq0, \]
where $d_G$ is the \emph{distance function} on $G$, i.e., $d_G(v,w)$ is the number of edges in the shortest path between $v$ and $w$.
The \emph{Hardy--Littlewood} maximal operator is now
\[ (M_G f)(v) := \max_{r\geq 0} \frac{1}{|\textup{B}_G(v,r)|} \sum_{w\in \textup{B}_G(v,r)} |f(w)|,\quad v\in V. \]
The \emph{$p$-variation} is here defined as
\[ \textup{Var}_p f := \Bigl( \sum_{vw\in E} |f(v)-f(w)|^p \Bigr)^{1/p}
= \Bigl( \frac{1}{2} \sum_{\substack{v,w\in V\\ d_G(v,w)=1} } |f(v)-f(w)|^p \Bigr)^{1/p},\quad p\in(0,\infty). \]
Finally, the constant $\mathbf{C}_{G,p}$ is defined as the smallest number from $[0,\infty]$ such that
\begin{equation*}
\textup{Var}_p M_G f \leq \mathbf{C}_{G,p} \textup{Var}_p f
\end{equation*}
holds for every function $f\colon V\to\R$.
In the particular case where $p=1$, we simply write $\textup{Var}$ for $\textup{Var}_1$ and $\mathbf{C}_{G}$ for $\mathbf{C}_{G,1}$.

The problem of determining the exact values of the numbers $\mathbf{C}_{G,p}$ was studied by Liu and Xue \cite{LX21}. Previous bounds for the $\textup{L}^p$-norms of maximal operators on graphs were obtained by Soria and Tradacete \cite{ST16}, who discovered that complete graphs $K_n$ and star graphs $S_n$ (see Figure \ref{fig:fig1}) have distinguished roles in certain extremal properties of maximal operators.
Liu and Xue \cite[Theorem 1.3]{LX21} proved that for $G=K_n$ the following inequalities hold: $1-1/n\leq \mathbf{C}_{K_n,p}<1$ for all $n\geq 3$, $p>0$ and $\mathbf{C}_{K_3,p}={2}/{3}$ for all $p>0$. 
They conjectured \cite[Conjecture 1.1 (i)]{LX21} that for every $n\ge 3$ and every $p\in(0,\infty)$ one has
\[ \mathbf{C}_{K_n,p}\ =\ 1-\frac1n. \]
A substantial amount of progress towards establishing this conjecture was made by the first and third authors in \cite{GRM21}, where they proved that the conjecture holds for all $p>\log_{6}4\approx 0.77$ \cite[Theorem 1]{GRM21}.

In this paper, we give a full answer, proving that the conjecture holds for all $p>0$.
\begin{theorem}\label{thm:complete}
For every integer $n\geq3$, every number $p\in(0,\infty)$, and every real-valued function $f$ defined on the vertices of $K_n$ we have
\begin{equation}\label{eq:thmcomplete}
\textup{Var}_p M_{K_n} f \leq \Bigl(1 - \frac{1}{n}\Bigr) \textup{Var}_p f.
\end{equation}
\end{theorem}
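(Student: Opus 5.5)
Throughout, write $M$ for $M_{K_n}$. The plan is to exploit the very rigid structure of balls in $K_n$, reduce to a finite-dimensional extremal problem, and push the extremal configurations to a few explicit families for which the inequality becomes a one-variable check.

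\emph{Step 1: explicit formula for $Mf$.} In $K_n$ every ball is either a singleton ($r<1$) or all of $V$ ($r\ge 1$). Hence $Mf(v)=\max(|f(v)|,A)$ with $A=\frac1n\sum_w|f(w)|$. Since $\bigl||f(v)|-|f(w)|\bigr|\le|f(v)-f(w)|$ and $Mf=M|f|$, we may assume $f\ge0$. Label the values $a_1\ge a_2\ge\dots\ge a_n\ge 0$ and let $k$ be the number of indices with $a_i>A$. If $k=0$ then $Mf$ is constant and there is nothing to prove; otherwise $1\le k\le n-1$. We then have
\[
(\textup{Var}_p Mf)^p=\sum_{1\le i<j\le k}(a_i-a_j)^p+(n-k)\sum_{i\le k}(a_i-A)^p,
\]
and we must show that this is at most $(1-\frac1n)^p\sum_{i<j}(a_i-a_j)^p$. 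The right-hand side splits into three groups of pairs: top--top, top--bottom and bottom--bottom.

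\emph{Step 2: reduction of the bottom values.} Fix $a_1,\dots,a_k$ and the sum $S$ of the bottom values $a_{k+1},\dots,a_n$. Then $A$, and hence the whole left-hand side, is fixed. The right-hand side is a sum of functions $t\mapsto|x-t|^p$ of the bottom variables. For $p\le1$ each such function is concave on $[0,A]$; for $p>1$ I would handle that range separately via the earlier result of \cite{GRM21}, or via Jensen's inequality as below. By concavity, on the polytope $\{0\le a_j\le A,\ \sum_{j>k}a_j=S\}$ the minimum of the right-hand side is attained at a vertex. At such a vertex all but at most one bottom value lies in $\{0,A\}$; values equal to $A$ may be moved into the ``top'' side or treated as ties. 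A similar concavity argument applies to the top values, subject to the constraints $a_i\ge A$ with $\sum_{i\le k} a_i$ fixed. This should reduce the problem to configurations with at most three or four distinct values, parametrised by the multiplicities and one or two real parameters.

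\emph{Step 3: the reduced inequality, which is the main obstacle.} For the reduced configurations, write $a_i-A=\frac1n\sum_j(a_i-a_j)$. For $p\ge1$, Jensen's inequality gives $(a_i-A)^p\le\frac1n\sum_{j>k}(a_i-a_j)^p\cdot\bigl(\tfrac{n-k}{n}\bigr)^{p-1}$ (after dropping the negative contributions from $j<i$). This absorbs the factor $(n-k)$ against $(1-\frac1n)^p$, with the top--top terms handled by $1\le(1-\frac1n)^{-p}$ being compensated by the leftover slack. For small $p$ this naive bound loses a factor $(n-k)n^{-p}$, and this is exactly why the earlier argument stopped at $p>\log_6 4$. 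Here the reduction of Step~2 is essential: with only a few distinct values, the inequality becomes an explicit inequality of the form
\[
\phi(t)=(1-\tfrac1n)^p\Bigl(\sum m_{\alpha\beta}|x_\alpha-x_\beta|^p\Bigr)-\bigl(\text{LHS}\bigr)\ge0
\]
in one or two variables. I would first try to verify the two-valued case, where the equality $1-\frac1n$ is attained (one point at height $1$, the rest at $0$). I would then treat the remaining cases by monotonicity in $t$ and the elementary inequality $(x+y)^p\le x^p+y^p$ for $p\le1$, checking the endpoints in $k$ ($k=1$ and $k=n-1$) separately.
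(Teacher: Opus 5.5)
Your Step~1 is correct, and deferring $p\ge1$ to \cite{GRM21} is legitimate (the paper does the same). For $0<p<1$, however, the proposal contains no proof, and the reduction it relies on fails.

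In Step~2 the concavity argument for the top values is wrong. With the bottom values and $\sum_{i\le k}a_i$ fixed (so $A$ is fixed), the quantity you need to show is nonnegative is
\[
\Bigl(1-\frac1n\Bigr)^p\sum_{i\le k<j}(a_i-a_j)^p-(n-k)\sum_{i\le k}(a_i-A)^p-\Bigl(1-\Bigl(1-\frac1n\Bigr)^p\Bigr)\sum_{i<j\le k}(a_i-a_j)^p+\text{const}.
\]
The last two sums enter with a minus sign, so they are convex, not concave, in $(a_1,\dots,a_k)$ on the ordered chamber. Nothing forces the minimum to a vertex, so the reduction to ``three or four distinct values'' is unjustified.

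The claim for the bottom values is also slightly off. $|a_j-a_l|^p$ is concave only on each ordered chamber, and the slices of such a chamber by $\sum_{j>k}a_j=S$ have vertices of the form $(A,\dots,A,t,\dots,t,0,\dots,0)$. This is harmless, but it is not ``all but one value in $\{0,A\}$''.

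Step~3 then only lists what you would try. The inequality that actually carries the theorem for small $p$ is never made explicit, let alone proved.

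Your Jensen sketch for $p\ge1$ also does not work as written. The bound $a_i-A\le\frac1n\sum_{j>k}(a_i-a_j)$ is false for $i<k$: dropping the $j<i$ terms still leaves the nonnegative terms with $i<j\le k$. Moreover, the top--top terms already sit on the left with coefficient $1>(1-\frac1n)^p$, so compensating for them needs an argument you have not given.

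What is missing is a mechanism that disposes of the top--top interactions and isolates a tractable inequality. The paper does this by induction on $n$:
\begin{itemize}
\item Sort the values increasingly and let $a_r$ be the smallest vertex with $f(a_r)\ge m$; the cases $r=1$ and $r=n$ are trivial or already known.
\item Deleting $a_r$ can only lower the average. One checks pointwise that $M_Gf(a_i)-M_Gf(a_j)\le M_{G'}f|_{V'}(a_i)-M_{G'}f|_{V'}(a_j)$ for every remaining pair.
\item Hence the $K_{n-1}$ hypothesis, together with $1-\frac1{n-1}\le1-\frac1n$, controls all edges not touching $a_r$.
\item Only the edges at $a_r$ remain. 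This is Proposition~\ref{prop:ineq} in the variables $x_i=f(a_i)-f(a_r)$ and $y_i=f(a_r)-f(a_i)\ge u=f(a_r)-m$, subject to $\sum x_i+nu=\sum y_i$.
\end{itemize}
That inequality is proved by Karamata on the $y_i$, Jensen on the $x_i$, and H\"older with exponents $1/p$ and $1/(1-p)$. These reduce it to $s^{1/(1-p)}+t^{1/(1-p)}\le1$, which follows from the convexity of $\varphi$ in $r$ and the endpoint estimates of Lemmas~\ref{lm:lemma1} and~\ref{lm:lemma2}.

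Without an analogue of this reduction and of the final quantitative estimate, your outline does not establish the theorem for $0<p\le\log_6 4$, the range not covered by earlier work.
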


\begin{figure}
\includegraphics[width=0.28\linewidth]{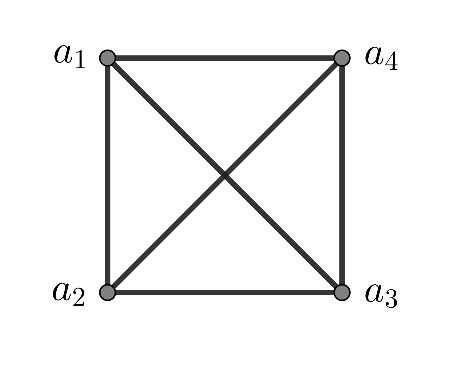}\hspace*{1cm}
\includegraphics[width=0.25\linewidth]{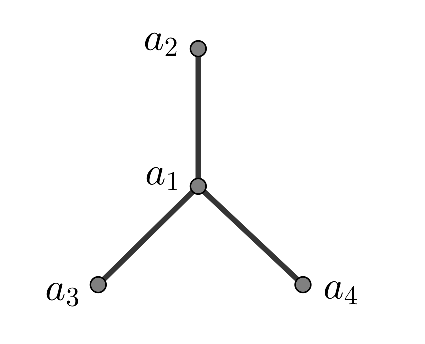}
\caption{Complete graph $K_4$ (left) and star graph $S_4$ (right).}
\label{fig:fig1}
\end{figure}

As noted in \cite{GRM21,LX21}, the constant $1-1/n$ in \eqref{eq:thmcomplete} is sharp, since equality holds for $K_n = \bigl(V,\binom{V}{2}\bigr)$, $V=\{a_1,a_2,\ldots,a_n\}$, $f\colon V\to\R$,
\begin{equation*}
f(a_i) := \begin{cases}
0 & \text{for } 1\leq i\leq n-1,\\
1 & \text{for } i=n,
\end{cases}
\end{equation*}
since then
\[ M_{K_n}f(a_i) = \begin{cases}
1/n & \text{for } 1\leq i\leq n-1,\\
1 & \text{for } i=n.
\end{cases} \]

Liu and Xue also conjectured \cite[Conjecture 1.1 (ii)]{LX21} that the same constant as before is also optimal for the star graph $G=S_n$.
In other words, they asked whether $\mathbf{C}_{S_n,p} = 1 - 1/n$ for $n\geq 3$ and every $p\in(0,\infty)$, after verifying this for $S_3$ and $p\leq 1$.
This was disproved by the first and third authors \cite[Theorem 2 (i)]{GRM21}, by computing
\begin{equation*}
\mathbf{C}_{S_3,p} = \frac{1}{3} \bigl(2^{p/(p-1)}+1\bigr)^{(p-1)/p} \quad\text{for } p>1.
\end{equation*}
Moreover, it has been shown \cite[Theorem 14]{GRM22} that
\[ \mathbf{C}_{S_n,2} = \frac{1}{n} \bigl(n^2-n-1\bigr)^{1/2} \quad\text{for } n\geq 3. \]
What remains is a plausible conjecture that the best constant is still $1-1/n$ for $n\geq 4$ and $0<p\leq1$, whereas \cite[Theorem 2 (ii), (iii)]{GRM21} confirmed this when $1/2\leq p\leq 1$ or $n=4$, and also in a certain larger range of $p\in(0,1)$ depending on $n\geq 5$. However, this line of inquiry is not explored in the present manuscript.  

Let us momentarily turn our attention to the special case $p=1$, hoping to compute sharp $1$-variational constants $\mathbf{C}_G$ of the Hardy--Littlewood maximal operator on some other graphs $G$.
Note that the only connected graphs on $3$ vertices, up to graph isomorphisms, are the two previously mentioned: the complete graph $K_3$ and the star graph $S_3$. Already from \cite{LX21} we know:
\begin{quote}
\emph{All connected graphs $G$ on $3$ vertices have the same constant $\mathbf{C}_G=2/3$.}
\end{quote}

The next simplest candidates to study are undirected graphs on $4$ vertices. There are $6$ such graphs in total. Besides the complete graph $K_4$ and the star graph $S_4$ (from Figure \ref{fig:fig1}), now we have the cycle graph $C_4$ and the path graph $P_4$ (depicted in Figure \ref{fig:fig2}), and then also the paw graph $Y$ and the diamond graph $D$ (illustrated in Figure \ref{fig:fig3}).
Using the basic ideas from linear programming and some computational assistance from the software Mathematica \cite{Mathematica} (see Section \ref{sec:4vertices}), we are able to evaluate the exact constants for each of these graphs, concluding the following.

\begin{theorem}\label{thm:4vertices}
\emph{All connected graphs $G$ on $4$ vertices have the same constant $\mathbf{C}_G=3/4$.}
\end{theorem}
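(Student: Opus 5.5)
We need a lower and an upper bound for each of the six graphs $K_4$, $S_4$, $C_4$, $P_4$, $Y$, $D$.

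\emph{Lower bound.} For every connected graph $G$ on $4$ vertices, $\mathbf{C}_G\ge 3/4$ is witnessed by the indicator of a single vertex. Take $f=\mathbbm{1}_{\{v\}}$ with $v$ a vertex of maximal eccentricity (a leaf of $S_4$, an endpoint of $P_4$). Then $M_Gf(v)=1$, while $M_Gf(w)=1/|\textup{B}_G(w,r)|$ for the smallest $r$ with $v\in\textup{B}_G(w,r)$. Summing $|M_Gf(u)-M_Gf(w)|$ over the edges and comparing with $\textup{Var} f=\deg v$ should give a ratio of at least $3/4$. If some graph fails here, I would instead try $f=\mathbbm{1}_{\{v\}}$ for a different vertex $v$, or two-level functions.

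\emph{Upper bound.} I would reduce to a finite family of linear programs. Since $M_G|f|=M_Gf$ and $\textup{Var}|f|\le \textup{Var} f$, we may assume $f\ge0$. Then $M_Gf(v)=\max_{r}A_r f(v)$ is a maximum of finitely many linear forms, the averages over the at most three distinct balls around $v$. Fix the following combinatorial data:
\begin{itemize}
\item the ordering of the values $f(a_1),\dots,f(a_4)$;
\item for each vertex, which radius attains the maximum;
\item the ordering of the resulting values $M_Gf$ along each edge.
\end{itemize}
On each such cell both $\textup{Var} f$ and $\textup{Var} M_Gf$ are linear, and the constraints are linear inequalities defining a polyhedral cone. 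Normalizing $\textup{Var} f=1$, the inequality $\textup{Var} M_Gf\le \frac34$ becomes a linear program whose maximum is attained at an extreme ray. So it suffices either to enumerate the extreme rays of each cone, or to solve all the LPs exactly in rational arithmetic (e.g.\ with Mathematica) and check that every optimum is at most $3/4$.

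\emph{Symmetry and reduction.} Graph automorphisms reduce the number of orderings, and we may shift so that $\min f=0$. Since the extreme rays of such cones tend to be $0/1$-valued or nearly so, I expect the extremal functions to be indicators of sets. As a sanity check, any candidate ratio above $3/4$ produced by the LP would be tested directly.

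\emph{Main obstacle.} The hard step is organizing the case enumeration so that it is verifiably complete and exact. In particular, the choice of maximizing radius must be consistent: we impose $A_{r^*}f(v)\ge A_rf(v)$ for all $r$, and these constraints must be included in each cone. I would automate generating the cells, handle ties by allowing non-strict inequalities (closed cones), and report the optimal values graph by graph. $K_4$ is already covered by Theorem~\ref{thm:complete} with $p=1$, so only the five remaining graphs need the computation.
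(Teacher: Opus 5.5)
Your proposal is correct and is essentially the paper's argument: single-vertex indicators give the lower bound, and your maximal-eccentricity rule does yield exactly $3/4$ for all six graphs (e.g.\ for the diamond it gives $\textup{Var}\,M_Df=3/2$ against $\textup{Var} f=2$, while the paper uses a degree-$3$ vertex there). The upper bound is obtained, as in the paper, by splitting the normalized problem into cells where objective and constraints are linear and solving each linear program exactly in Mathematica; the only differences are that the paper cites earlier work for $S_4$ rather than recomputing it, and your ``at most three'' balls per vertex should be four for the endpoints of $P_4$ (radii $0,1,2,3$), which does not affect the method.
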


\begin{figure}
\includegraphics[width=0.28\linewidth]{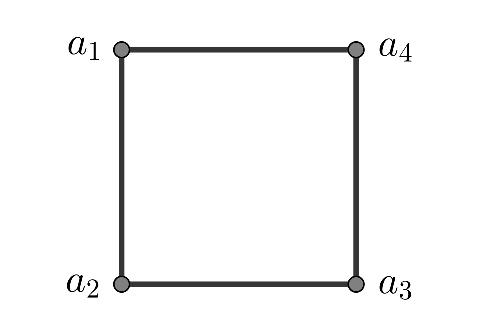}\hspace*{1cm}
\includegraphics[width=0.29\linewidth]{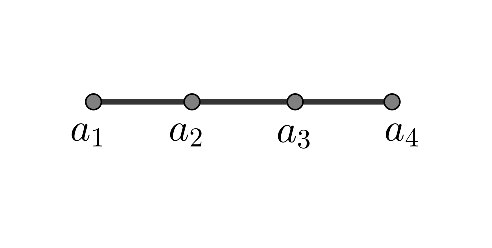}
\caption{Cycle graph $C_4$ (left) and path graph $P_4$ (right).}
\label{fig:fig2}
\end{figure}

\begin{figure}
\includegraphics[width=0.25\linewidth]{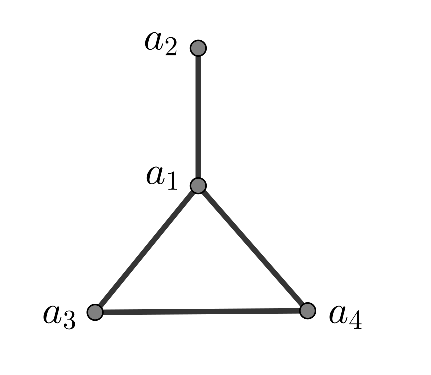}\hspace*{1cm}
\includegraphics[width=0.28\linewidth]{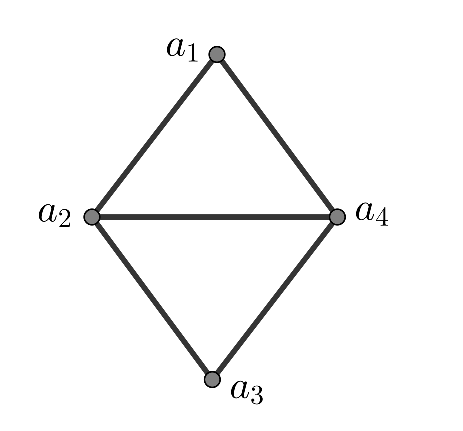}
\caption{Paw graph $Y$ (left) and diamond graph $D$ (right).}
\label{fig:fig3}
\end{figure}

The next challenge is to study the $21$ isomorphism classes of undirected graphs on $5$ vertices. The corresponding optimization problems are solved in Mathematica (see Section \ref{sec:5vertices}) to yield the following.

\begin{observation}\label{obs:5vertices}
\emph{Connected graphs $G$ on $5$ vertices have their sharp constants $\mathbf{C}_G$ in the set $\{4/5, 49/60, 33/40, 5/6, 17/20\}$.}
\end{observation}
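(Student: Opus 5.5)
The statement of Observation~\ref{obs:5vertices} is a computational claim, so the plan is to reduce the computation of each $\mathbf{C}_G$ to finitely many linear programs and then solve them exactly, one isomorphism class at a time, exactly as was done for Theorem~\ref{thm:4vertices}. First I reduce to nonnegative $f$. Replacing $f$ by $|f|$ leaves $M_Gf$ unchanged and does not increase $\textup{Var} f$, so it suffices to consider $f\geq 0$. Both sides of the inequality are homogeneous of degree one, so I normalize $\textup{Var} f=1$.

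Next I partition $\{f\geq0\}\subset\R^5$ into finitely many polyhedral cones on which everything is linear. On each cone I fix three pieces of data:
\begin{itemize}
\item the ordering of the values $f(v)$, so that every term $|f(v)-f(w)|$ has a known sign;
\item for each vertex $v$, a choice of radius $r_v$ (one of the finitely many distinct balls $\textup{B}_G(v,r)$) attaining the maximum in $M_Gf(v)$, imposed by the linear constraints that the chosen average dominates the averages over all other balls around $v$;
\item the ordering of the values $M_Gf(v)$, so that $\textup{Var} M_G f$ is linear in $f$.
\end{itemize}
On each such cone the problem becomes the linear program of maximizing $\textup{Var} M_G f$ subject to $\textup{Var} f=1$ and the defining constraints. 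Its optimum is attained at a vertex of the feasible polytope, so it is a rational number that can be computed exactly in Mathematica with rational arithmetic. The constant $\mathbf{C}_G$ is then the maximum over all cones.

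To keep the enumeration manageable I quotient by the automorphism group of $G$ acting on the orderings. I also prune radius choices that are trivially dominated: for instance, radii at least the eccentricity of $v$ all give the full average, so they collapse to a single choice. I run this for each of the $21$ connected graphs on $5$ vertices.

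For each $G$ I also record an explicit extremal $f$, read off from the optimal vertex of the winning LP. I then check by hand that this $f$ achieves the claimed ratio, which certifies the lower bound independently of the search. For example, the indicator of a single vertex gives $4/5$ on many graphs. The upper bound rests on the exhaustive LP search, and I can certify it further by exporting the dual solutions, which bound every cone.

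The main obstacle is making the case enumeration provably exhaustive and exact. This requires careful handling of ties (nonstrict inequalities in both orderings), avoiding floating-point LP solvers, and keeping the combinatorial explosion in check. There are $5!$ orderings of $f$, times the product over vertices of the number of admissible radii, times $5!$ orderings of $M_Gf$. I will cut this down by merging the last two choices: I consider only orderings of $M_Gf$ that are consistent with the radii, and I use symmetry. Once the optima are computed, the final step is simply to tabulate them and observe that the set of values is $\{4/5, 49/60, 33/40, 5/6, 17/20\}$.
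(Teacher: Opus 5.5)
Your plan is sound, but for five vertices it is not what the paper does. The paper simply runs Mathematica's \texttt{Maximize} on the global problem of maximizing $\textup{Var} M_G f$ subject to $f\geq0$, $\textup{Var} f=1$ for each of the $21$ graphs, and then tabulates the outputs. The authors concede that this is not a proof: \texttt{Maximize} is only guaranteed to find global maxima of polynomial objectives, while $\textup{Var} M_G f$ is merely piecewise linear. That is exactly why the statement is an Observation rather than a Theorem.

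You instead transplant the rigorous splitting-into-linear-programs method that the paper uses only for four vertices (Section~\ref{sec:4vertices}, Theorem~\ref{thm:4vertices}). You also add explicit extremizers for the lower bounds and dual certificates for the upper bounds. Carried out, your route would upgrade the Observation to a theorem. The paper's route buys only speed and simplicity, by avoiding a large case enumeration.

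Two minor remarks. First, on each piece the feasible set is an unbounded polyhedron rather than a polytope, since $f\mapsto f+c$ with $c\geq0$ preserves all your constraints. This is harmless: the objective is constant in that direction and the set contains no line. Still, imposing $\min f=0$ makes it a genuine polytope, because connectedness gives $f\in[0,1]^5$. Second, once the ordering of $f$ and the radii are fixed, $\textup{Var} M_G f$ is a sum of absolute values of linear forms, hence convex. Its maximum is therefore attained at a vertex of that polytope, so you can drop the split by the ordering of $M_G f$ altogether and remove one factor of $5!$ from your count.
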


A subtle distinction between Theorem \ref{thm:4vertices} and Observation \ref{obs:5vertices} is that the latter is verified using the Mathematica command \verb+Maximize+. While generally reliable, this command does not guarantee finding the global maximum unless the objective function is a polynomial, even though we strongly believe that this is indeed the case here.
We will see that the maximal value $17/20$ is attained for five non-isomorphic graphs, which seems to cause difficulties in a prospective research problem of classifying graphs with a fixed number of vertices that are extremal for $\mathbf{C}_G$.


From the modest amount of data presented above, one might suspect that the $1$-variational constants $\mathbf{C}_G$ never exceed $1$. Our next result shows that, in fact, this is very far from the truth, as the best constants $\mathbf{C}_{G,p}$ can be arbitrarily large, even for every fixed $p\in(0,\infty)$.

\begin{theorem}\label{thm:largeconst}
For every $p,C\in(0,\infty)$ there exists a finite undirected graph $G=(V,E)$ such that $\mathbf{C}_{G,p}>C$.
\end{theorem}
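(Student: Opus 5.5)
The plan is to show that a single edge on which $f$ jumps can force $M_G f$ to jump by an amount close to $1$ on $N$ different edges, with $N$ arbitrary. Concretely, I will build a graph $G$ and take $f=\mathbbm{1}_S$ for a large clique $S$ that touches the rest of $G$ through exactly one edge. Then $\textup{Var}_p f=1$, while $\textup{Var}_p M_G f\geq (1-o(1))N^{1/p}$. Choosing $N$ with $N^{1/p}>C$ and letting the auxiliary size parameters grow gives $\mathbf{C}_{G,p}>C$ for every $p$ at once. Using the indicator of a large set, rather than a single point mass, is deliberate. With a point mass every jump of $M_G f$ is of size about $1/N$, and summing $p$-th powers of $N$ such jumps fails to diverge for $p\geq1$. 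Here each jump is of order $1$, so the count of edges is all that matters.

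\emph{Construction.} Fix integers $N\ll m\ll Z$. Let $S$ be a clique on $m$ vertices and fix $s\in S$. Add a vertex $u$ adjacent to $s$, and $N$ vertices $\ell_1,\dots,\ell_N$, each adjacent to $u$. For each $i$, attach a path $\ell_i - w_i - x_i - y_i$. Finally add a set $\mathcal Z$ of $Z$ vertices, each joined to every $y_i$. Set $f=1$ on $S$ and $f=0$ elsewhere. The only edge on which $f$ jumps is $us$, so $\textup{Var}_p f=1$.

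\emph{Distance bookkeeping.} I will verify the following distances:
\[
d(\ell_i,s)=2,\quad d(\ell_i,S\setminus\{s\})=3,\quad d(\ell_i,\mathcal Z)=4,\quad d(w_i,s)=3,\quad d(w_i,S\setminus\{s\})=4,\quad d(w_i,\mathcal Z)=3 .
\]
It also has to be checked that the detour through $\mathcal Z$ never creates a shortcut. Any route through $\mathcal Z$ between two of the paths costs at least $2$ extra edges beyond the $y$'s, so all relevant distances are realized through $u$.

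\emph{Lower bound at $\ell_i$.} The ball $\textup{B}_G(\ell_i,3)$ contains all of $S$. Its other vertices are only $u$, the $\ell_j$, the $w_j$, the $x_j$, and $y_i$, so its size is at most $m+3N+O(1)$ and it avoids $\mathcal Z$. Hence
\[
M_Gf(\ell_i)\geq \frac{m}{m+3N+O(1)}=1-o(1)\qquad\text{as } m/N\to\infty .
\]

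\emph{Upper bound at $w_i$.} Balls $\textup{B}_G(w_i,r)$ with $r\leq2$ contain no point of $S$, so they give average $0$. The ball with $r=3$ contains only the point $s$ of $S$, together with at least $Z$ vertices of $\mathcal Z$. Every ball with $r\geq4$ contains $\mathcal Z$ and at most $m$ points of $S$. Hence
\[
M_Gf(w_i)\leq \frac{m}{Z}=o(1)\qquad\text{as } Z/m\to\infty .
\]

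\emph{Conclusion.} Combining the two bounds over the $N$ edges $\ell_iw_i$ gives
\[
\textup{Var}_p M_G f\geq \Bigl(\sum_{i=1}^N |M_Gf(\ell_i)-M_Gf(w_i)|^p\Bigr)^{1/p}\geq (1-o(1))\,N^{1/p}.
\]
The main obstacle is the distance bookkeeping. The large set $\mathcal Z$ must sit inside every sufficiently large ball around each $w_i$, but outside the critical radius-$3$ ball around its neighbour $\ell_i$. This is why $\mathcal Z$ is attached at the far ends $y_i$ of the paths, rather than closer to the $\ell_i$.
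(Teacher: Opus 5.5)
Your argument is correct, and it takes a genuinely different route from the paper's.

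I checked the bookkeeping. $\textup{B}_G(\ell_i,3)$ is exactly $S\cup\{u\}\cup\{\ell_j,w_j : 1\leq j\leq N\}\cup\{x_i,y_i\}$, and it misses $\mathcal Z$. Every ball around $w_i$ that meets $S$ contains all of $\mathcal Z$. Hence $\textup{Var}_pM_Gf\geq N^{1/p}\bigl(\frac{m}{m+2N+3}-\frac{m}{Z}\bigr)$, and a choice such as $N>(2C)^p$, $m=4(2N+3)$, $Z=4m$ finishes the proof.

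Two wording slips are harmless. First, $d(w_i,y_j)=4$ is realized through $\mathcal Z$, so not all distances go through $u$. Only distances to $S$ and to $\mathcal Z$ matter, and those are as you state. Second, ``for every $p$ at once'' should read ``for each $p$, with $N$ depending on $p$'', because $N^{1/p}\to1$ as $p\to\infty$.

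The paper instead tests a point mass at the root of a tree. There the jumps of $M_Gf$ are only of size about $1/k$: at least $1/(C_mk)$ at $b_m$ versus at most $1/k^2$ at $w$. This is the obstruction you anticipated. The paper overcomes it with $m>p$ levels of $k$-fold branching, which gives $k^m$ parallel edges and a gain of order $k^{m/p-1}$. The recursive path lengths $l_{i+1}=2l_i+1$ are needed so that the critical ball around $b_m$ has only $O_m(k)$ vertices.

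Your indicator of a large set makes each jump of order $1$. So a single layer of $N$ edges in a graph of bounded diameter suffices, $p$ enters only through the size of $N$, and the distance analysis is much lighter.

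What the paper's construction buys is that $G$ is a tree and $f$ is a single point mass. Your example can also be made a tree, though. Replace the clique by a star centred at $s$, and give each $y_i$ its own $Z$ pendant leaves instead of the shared set $\mathcal Z$; the same estimates go through.
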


However, the path on $n$ vertices, denoted $P_n$, is a rather special type of graph to which the example from the proof of Theorem \ref{thm:largeconst} (presented in the last section) does not apply. We conjecture the values of their optimal constants $\mathbf{C}_{P_n}$. 

\begin{conjecture}\label{conj:paths}
For every integer $n\geq 3$ the corresponding optimal $1$-variational constant for the path graph $P_n$ equals
\[ \mathbf{C}_{P_n} = 1 - \frac{1}{n}. \]
\end{conjecture}

We have numerical evidence that Conjecture \ref{conj:paths} holds for all $n\leq 10$.
It could be viewed as a graph-theoretic and finitary analogue of the aforementioned open problems on $\Z$ and $\R$, which it implies.
Namely, by embedding an arbitrarily long interval of integers into $P_n$ for sufficiently large $n$, one can easily derive 
\[ \mathbf{C}_{\Z} \leq \limsup_{n\to\infty}\mathbf{C}_{P_n}. \]
In other words, proving $\mathbf{C}_{P_n}\leq 1$ for all $n$ would already suffice to establish the claim $\mathbf{C}_{\Z}\leq 1$ on the integer lattice, and then also on the real line.
Thus, Conjecture \ref{conj:paths}, if true, is expected to be difficult to prove.


\section{Proof of Theorem \ref{thm:complete} on complete graphs}

Note that the cases $p\geq1$ of the theorem were already covered by \cite[Theorem 1 (i), (iii)]{GRM21}.
Here we modify the proof from \cite[\S 2.1.1]{GRM21} so that it applies to all $p\in(0,1)$ and all $n\geq3$.
It is clearly sufficient to consider only nonnegative functions $f$.

For completeness, let us first briefly review the inductive approach from \cite{GRM21}.
The induction basis can be $n=3$, which is simply \cite[Theorem 1.3 (ii)]{LX21}, as has already been commented on.
Take some $n\geq 4$ and suppose that Theorem \ref{thm:complete} holds for the graph $K_{n-1}$.
If the vertices of $G=\bigl(V,\binom{V}{2}\bigr)\cong K_n$ are listed as $V=\{a_1,a_2,\ldots,a_n\}$, then its maximal function is
\[ M_{G} f(a_i) = \max\{f(a_i),m\} \quad\text{for } i=1,2,\ldots,n, \]
where 
\[ m = \frac{1}{n} \sum_{i=1}^{n}f(a_i) \] 
is the average value of $f$.
Without loss of generality, we sort the numbers $f(a_i)$, $1\leq i\leq n$, in increasing order and then take the unique $r\in\{1,\ldots,n\}$ such that
\[ 0 \leq f(a_1) \leq \cdots \leq f(a_{r-1}) < m \leq f(a_r) \leq \cdots \leq f(a_n). \]
The conjectured sharp variational inequality \eqref{eq:thmcomplete} for a given $f\colon V\to[0,\infty)$ now reads
\begin{equation}\label{eq:varcomplete}
\sum_{\substack{i,j\\ i>j\geq r}} \bigl( f(a_i) - f(a_j) \bigr)^p
+ (r-1) \sum_{i=r}^n \bigl( f(a_i) - m \bigr)^p
\leq \Bigl(1-\frac{1}{n}\Bigr)^p \sum_{\substack{i,j\\ i>j}} \bigl( f(a_i) - f(a_j) \bigr)^p.
\end{equation}
Note that the desired inequality \eqref{eq:varcomplete} is actually trivial when $r=1$, as then $f$ needs to be a constant.
Also, its particular case $r=n$ was already shown in \cite[Lemma 10]{GRM21}. Consequently, we can assume $2\leq r\leq n-1$.

Let us apply the induction hypothesis to the graph $G'=\bigl(V',\binom{V'}{2}\bigr)\cong K_{n-1}$ obtained by removing the vertex $a_r$ from $G$ and to the function obtained by restricting $f$ to $V'=V\setminus\{a_r\}$:
\begin{equation}\label{eq:compinducthyp}
\sum_{\substack{i,j\\ i>j\\ i\neq r,\ j\neq r}} \bigl( M_{G'}f|_{V'}(a_i)-M_{G'}f|_{V'}(a_j) \bigr)^p \leq \Bigl(1-\frac{1}{n-1}\Bigr)^p \sum_{\substack{i,j\\ i>j\\ i\neq r,\ j\neq r}} \bigl( f(a_i) - f(a_j) \bigr)^p.
\end{equation}
Observe that the average value of $f|_{V'}$ satisfies 
\[ m' := \frac{1}{n-1} \sum_{\substack{i\\ i\neq r}}f(a_i) \leq m \]
and
\[ M_{G'} f|_{V'}(a_i) = \max\{f(a_i),m'\} \quad\text{for } i\neq r. \]
We claim that 
\begin{equation}\label{eq:comppointwise}
\underbrace{M_{G}f(a_i)-M_{G}f(a_j)}_{\geq0} \leq \underbrace{M_{G'}f|_{V'}(a_i)-M_{G'}f|_{V'}(a_j)}_{\geq0}
\end{equation}
whenever $i>j$, $i\neq r$, $j\neq r$, and verify it case by case.
\begin{itemize}
\item If $i>j>r$, then both sides of \eqref{eq:comppointwise} are equal to $f(a_i)-f(a_j)$.
\item If $r>i>j$, then the left-hand side of \eqref{eq:comppointwise} is zero, while the right-hand side is nonnegative.
\item Finally, if $i>r>j$, then we have
\[ M_{G}f(a_i) = f(a_i) = M_{G'}f|_{V'}(a_i) \]
and
\[ M_{G}f(a_j) = m \geq \max\{f(a_j),m'\} = M_{G'}f|_{V'}(a_j), \]
so \eqref{eq:comppointwise} follows once again.
\end{itemize}
Now, \eqref{eq:compinducthyp}, \eqref{eq:comppointwise}, and $1-1/(n-1) \leq 1-1/n$ together imply
\[ \sum_{\substack{i,j\\ i>j\geq r+1}} \bigl( f(a_i) - f(a_j) \bigr)^p
+ (r-1) \sum_{i=r+1}^n \bigl( f(a_i) - m \bigr)^p
\leq \Bigl(1-\frac{1}{n}\Bigr)^p \sum_{\substack{i,j\\ i>j\\ i\neq r,\ j\neq r}} \bigl( f(a_i) - f(a_j) \bigr)^p. \]
In order to deduce \eqref{eq:varcomplete} from the last estimate, it remains to add the contribution of the vertex $a_r$ after showing
\begin{align*}
& \sum_{i=r+1}^{n} \bigl( f(a_i) - f(a_r) \bigr)^p
+ (r-1) \bigl( f(a_r) - m \bigr)^p \\
& \leq \Bigl(1-\frac{1}{n}\Bigr)^p \Bigl( \sum_{i=r+1}^n \bigl( f(a_i) - f(a_r) \bigr)^p + \sum_{j=1}^{r-1} \bigl( f(a_r) - f(a_j) \bigr)^p \Bigr).
\end{align*}
This last inequality will be a consequence of the following proposition, once the substitutions
\begin{align*}
u & = f(a_r) - m \geq 0, \\
x_i & = f(a_i) - f(a_r) \geq 0, \quad i=r+1,\ldots,n, \\
y_i & = f(a_r) - f(a_i) \geq u, \quad i=1,\ldots,r-1
\end{align*}
are performed.

\begin{proposition}\label{prop:ineq}
For integers $2\leq r< n$, a parameter $p\in(0,1)$, and real numbers
\begin{align*}
x_i\geq 0, & \quad i=r+1,\ldots,n, \\
y_i\geq u\geq 0, & \quad i=1,\ldots,r-1
\end{align*}
satisfying
\[ \sum_{i=r+1}^{n} x_i + nu = \sum_{i=1}^{r-1} y_i, \]
the following inequality holds:
\begin{equation}\label{eq:mainineq}
\sum_{i=r+1}^{n} x_i^p + (r-1)u^p 
\leq \Bigl(1-\frac{1}{n}\Bigr)^p  
\Bigl( \sum_{i=r+1}^{n} x_i^p + \sum_{i=1}^{r-1} y_i^p \Bigr).
\end{equation} 
\end{proposition}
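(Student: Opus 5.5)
The plan is to reduce \eqref{eq:mainineq} to an inequality in two real variables by concavity, and then finish with a weighted concavity (reverse H\"older) estimate. Throughout, write $c=(1-1/n)^p\in(0,1)$, $k=n-r\geq1$ for the number of $x_i$'s and $s=r-1\geq1$ for the number of $y_i$'s, so that $n=s+k+1$. Then \eqref{eq:mainineq} is equivalent to
\[
(1-c)\sum_{i=r+1}^n x_i^p + s\,u^p \leq c\sum_{i=1}^{r-1} y_i^p .
\]

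\emph{Step 1 (symmetrization).} Fix $u$ and the sums $X=\sum_i x_i$ and $Y=\sum_i y_i = X+nu$. The $x_i$ appear only on the left, with positive coefficient $1-c$. Since $t\mapsto t^p$ is concave for $p\in(0,1)$, the left-hand side is maximized when all $x_i$ equal $x=X/k$. On the right, $\sum y_i^p$ is a concave function on the polytope $\{y_i\geq u,\ \sum y_i=Y\}$. Its minimum is therefore attained at a vertex: $s-1$ of the $y_i$ equal $u$ and one equals
\[
Y-(s-1)u = kx+(n-s+1)u = kx+(k+2)u .
\]
So it suffices to prove
\[
(1-c)k\,x^p + \bigl((1-c)s+c\bigr)u^p \leq c\,\bigl(kx+(k+2)u\bigr)^p \qquad (x,u\geq0).
\]

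\emph{Step 2 (one-variable reduction).} Since $n$ is fixed, $c$ is fixed, and the coefficient $(1-c)s+c$ increases with $s$. We may therefore substitute $s=n-1-k$, which leaves one integer parameter $k\in\{1,\dots,n-2\}$. For the right-hand side we use concavity in the form
\[
(\lambda a+(1-\lambda)b)^p\geq \lambda a^p+(1-\lambda)b^p ,
\]
applied with $a=kx/\lambda$ and $b=(k+2)u/(1-\lambda)$. This gives, for every $\lambda\in(0,1)$,
\[
(kx+(k+2)u)^p \geq \lambda^{1-p}k^p x^p + (1-\lambda)^{1-p}(k+2)^p u^p .
\]
It then suffices to find $\lambda\in(0,1)$ satisfying both
\[
c\,\lambda^{1-p}k^p\geq (1-c)k
\qquad\text{and}\qquad
c\,(1-\lambda)^{1-p}(k+2)^p\geq (1-c)(n-1-k)+c .
\]
Eliminating $\lambda$, this is equivalent to the scalar inequality
\[
\Bigl(\frac{(1-c)k^{1-p}}{c}\Bigr)^{\frac1{1-p}}+\Bigl(\frac{(1-c)(n-1-k)+c}{c\,(k+2)^p}\Bigr)^{\frac1{1-p}}\leq 1 .
\]
This is exactly what one gets by maximizing the ratio of the two sides of Step 1 over $x/u$, so it loses nothing.

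\emph{Step 3 (the main obstacle).} The remaining task is to verify the scalar inequality above for all $p\in(0,1)$, all $n\geq3$, and all $1\leq k\leq n-2$. I expect this to be the hard part, since the inequality becomes tight in limiting regimes. As $p\to0$ we have $c\to1$, and the constraints force $\lambda\to0$. The extremal example $f=\mathbf 1_{\{a_n\}}$ corresponds to $k=1$ with $x$ and $u$ in a specific ratio.

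My approach would use $1-c=1-(1-1/n)^p\leq p/n$ and $c\geq 1-p/n$ to bound both terms explicitly. I would then treat the result as a function of $k$, show it is convex in $k$, and reduce to the endpoints $k=1$ and $k=n-2$. For those endpoints I would check the inequality directly, via Bernoulli-type estimates for $(1-1/n)^p$ and for $(k+2)^{-p/(1-p)}$.

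If this direct route turns out too lossy for small $p$, the fallback is to skip $\lambda$. Instead I would set $t=kx/((k+2)u)$ and study
\[
g(t)=c(1+t)^p-(1-c)k^{1-p}(k+2)^{-p}t^p
\]
directly. By concavity $g$ has a single critical point, where the minimum is attained, and the task becomes comparing its value there with $\bigl((1-c)(n-1-k)+c\bigr)(k+2)^{-p}$.
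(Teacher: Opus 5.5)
Your Steps 1 and 2 are correct and match the paper's reduction. The vertex-of-the-simplex argument is its Karamata step, equalizing the $x_i$ is its Jensen step, and optimizing over $\lambda$ is its H\"older step. So your scalar inequality is exactly the paper's $\varphi(r)\le 1$ with $r=n-k$, namely $\gamma^{1/(1-p)}k+\bigl(\gamma(n-1-k)+1\bigr)^{1/(1-p)}(k+2)^{-p/(1-p)}\le 1$ with $\gamma=(1-c)/c=(n/(n-1))^p-1$.

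The gap is Step 3. All the work lies there, you leave it as a plan, and its one concrete ingredient is false. For $0<p<1$ Bernoulli's inequality reads $(1+y)^p\le 1+py$, so $c=(1-1/n)^p\le 1-p/n$ and $1-c\ge p/n$. For example, $n=3$, $p=\tfrac{1}{2}$ gives $c\approx 0.816<0.833$. Both of your estimates therefore point the wrong way. Both terms of the scalar inequality increase with $\gamma$, so you need an upper bound on $\gamma$. The correct versions of your estimates only give the lower bound $\gamma\ge p/(n-p)$. The usable estimate is $\gamma\le p/(n-1)$, from $(1+\frac{1}{n-1})^p\le 1+\frac{p}{n-1}$; this is the paper's $n^p-(n-1)^p\le p(n-1)^{p-1}$. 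Incidentally, $\mathbf{1}_{\{a_n\}}$ has $r=n$, so it is not an instance of this proposition and does not correspond to $k=1$. The scalar inequality is strict for all $n\ge 3$ and $p\in(0,1)$, and only degenerates as $p\to 0$.

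Even with that repaired, two things still need actual proofs.

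\emph{Convexity in $k$.} This is true. The first term is linear in $k$, and the second is the perspective $m\,\Phi(L/m)$ of the convex function $\Phi(L)=L^{1/(1-p)}$, evaluated at the affine pair $(L,m)=(\gamma(n-1-k)+1,\,k+2)$. The paper instead computes $\varphi''>0$ directly.

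\emph{The endpoints.} These are the substance of the proof.
At $k=n-2$ the second term equals $(n-1)^{-p/(1-p)}$ exactly, and the bound on $\gamma$ reduces the claim to $(n-2)p^{1/(1-p)}+n-1\le (n-1)^{1/(1-p)}$.
At $k=1$ it reduces to $p^{1/(1-p)}+\bigl(((1+p)n-2p-1)/3^p\bigr)^{1/(1-p)}\le (n-1)^{1/(1-p)}$.
The paper's Lemmas 1 and 2 prove these by treating $n\ge 2$ as a real variable. They show the difference of the two sides is increasing, using $p^{1/(1-p)}<p/(1-p)$ (positivity of the binary entropy) and $3^p>1+p$. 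They then check $n=2$, using $3^{-p/(1-p)}<1-p$ for the second inequality.

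These endpoint inequalities are delicate. At $k=1$ the left side of your scalar inequality is $1+p\bigl((n-1)\log\frac{n}{n-1}-\log 3\bigr)+o(p)$ as $p\to 0$. For large $n$ the margin is therefore only about $p(\log 3-1)\approx 0.1\,p$. Any proof must in effect use $3>e$, as the paper's does.
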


Inequality \eqref{eq:mainineq} was formulated in \cite[Formula (2.17)]{GRM21}, where it was shown only for $p\geq \log_6 4$.
By the previous considerations, and as noted in \cite[Remark 11]{GRM21}, Theorem \ref{thm:complete} follows from the extension of \eqref{eq:mainineq} to all $0<p<1$. 

\begin{proof}[Proof of Proposition \ref{prop:ineq}]
Estimate \eqref{eq:mainineq} can be rewritten as
\begin{equation}\label{eq:mainineqre}
\biggl(1-\Bigl(1-\frac{1}{n}\Bigr)^p\biggr) \sum_{i=r+1}^{n} x_i^p + (r-1)u^p 
\leq \Bigl(1-\frac{1}{n}\Bigr)^p \sum_{i=1}^{r-1} y_i^p.
\end{equation} 
We first apply Karamata's inequality to the concave function $x\mapsto x^p$, just as in \cite{GRM21}, to get
\[ \sum_{i=1}^{r-1} y_i^p \geq \Bigl( \sum_{i=1}^{r-1} y_i - (r-2)u \Bigr)^p + (r-2) u^p
= \Bigl( \sum_{i=r+1}^{n} x_i + (n-r+2)u \Bigr)^p + (r-2) u^p \]
and Jensen's inequality for $x\mapsto x^p$ to obtain
\[ \sum_{i=r+1}^{n} x_i^p 
\leq (n-r)^{1-p} \Bigl( \sum_{i=r+1}^{n} x_i \Bigr)^p. \]
Using these two, estimate \eqref{eq:mainineqre} will follow from
\begin{align*}
\biggl(1-\Bigl(1-\frac{1}{n}\Bigr)^p\biggr) (n-r)^{1-p} \Bigl( \sum_{i=r+1}^{n} x_i \Bigr)^p 
+ \biggl(r-1 - \Bigl(1-\frac{1}{n}\Bigr)^p (r-2)\biggr) u^p & \\
\leq \Bigl(1-\frac{1}{n}\Bigr)^p \Bigl( \sum_{i=r+1}^{n} x_i + (n-r+2)u \Bigr)^p & .
\end{align*}
Dividing by $(1-1/n)^p$ and substituting
\begin{align*}
\alpha = \sum_{i=r+1}^{n} x_i,\quad  & \beta = (n-r+2)u, \\
s = \biggl(\Bigl(\frac{n}{n-1}\Bigr)^p - 1\biggr) (n-r)^{1-p},\quad &
t = \biggl(\Bigl(\frac{n}{n-1}\Bigr)^p (r-1) - r+2\biggr) (n-r+2)^{-p},
\end{align*}
we can rewrite the last inequality as
\[ \alpha^p s + \beta^p t \leq (\alpha+\beta)^p. \]
For its proof it is sufficient to show
\begin{equation}\label{eq:ineqst}
s^{1/(1-p)} + t^{1/(1-p)} \leq 1,
\end{equation} 
because then the proof of the proposition is finalized by H\"{o}lder's inequality for the conjugate exponents $1/p$ and $1/(1-p)$:
\[ \alpha^p s + \beta^p t \leq \bigl( (\alpha^p)^{1/p} + (\beta^p)^{1/p} \bigr)^{p} \bigl( s^{1/(1-p)} + t^{1/(1-p)} \bigr)^{1-p} \stackrel{\eqref{eq:ineqst}}{\leq} (\alpha+\beta)^p. \]

For the aforementioned choices of $s$ and $t$ property \eqref{eq:ineqst} reads
\[ \biggl(\Bigl(\frac{n}{n-1}\Bigr)^p - 1\biggr)^{1/(1-p)} (n-r) 
+ \biggl(\Bigl(\frac{n}{n-1}\Bigr)^p (r-1) - r+2\biggr)^{1/(1-p)} (n-r+2)^{-p/(1-p)} \leq 1 \]
and it will be a consequence of
\begin{equation}\label{eq:ineqst2}
\varphi(x) \leq 1 \quad \text{for } x\in[2,n-1],
\end{equation}
where $n\geq 3$ and $p\in(0,1)$ are fixed, while $\varphi$ is defined as
\begin{align*}
\varphi(x) :=\, & \biggl(\Bigl(\frac{n}{n-1}\Bigr)^p - 1\biggr)^{1/(1-p)} (n-x) \\
& + \biggl(\Bigl(\frac{n}{n-1}\Bigr)^p (x-1) - x+2\biggr)^{1/(1-p)} (n-x+2)^{-p/(1-p)}.
\end{align*}
Observe that
\begin{align*}
\varphi''(x) = & \frac{p}{(1-p)^2} \biggl(\Bigl(\frac{n}{n-1}\Bigr)^p (n+1) - n\biggr)^2 \\
& \times \biggl(\Bigl(\frac{n}{n-1}\Bigr)^p (x-1) - x+2\biggr)^{(2p-1)/(1-p)} (n-x+2)^{(p-2)/(1-p)} >0.
\end{align*}
Consequently, $\varphi$ is convex on the interval $[2,n-1]$, so the verification of \eqref{eq:ineqst2} boils down to
\begin{equation}\label{eq:ineqst3}
\varphi(2) = \frac{(n-2) (n^p-(n-1)^p)^{1/(1-p)} + 1}{(n-1)^{p/(1-p)}} \leq 1
\end{equation}
and
\begin{equation}\label{eq:ineqst4}
\varphi(n-1) = \frac{(n^p-(n-1)^p)^{1/(1-p)} + 3^{-p/(1-p)} ((n-2)n^p-(n-3)(n-1)^p)^{1/(1-p)}}{(n-1)^{p/(1-p)}} \leq 1.
\end{equation}
Inequalities \eqref{eq:ineqst3} and \eqref{eq:ineqst4} are respectively shown in Lemmata \ref{lm:lemma1} and \ref{lm:lemma2} below.
\end{proof}

It remains to establish two auxiliary inequalities.

\begin{lemma}\label{lm:lemma1}
For an integer $n\geq 2$ and a real number $p\in(0,1)$ one has
\[ (n-2) \bigl(n^p-(n-1)^p\bigr)^{1/(1-p)} + 1 \leq (n-1)^{p/(1-p)}. \]
\end{lemma}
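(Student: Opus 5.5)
We will prove the inequality by bounding the difference $n^p-(n-1)^p$ with the mean value theorem, which makes the exponent $1/(1-p)$ cancel cleanly, and then finish with the elementary bound $e^z\geq 1+z$. Set $q:=1/(1-p)>1$, so that $q-1=p/(1-p)=pq$ and $(p-1)q=-1$. The case $n=2$ is trivial: both sides equal $1$. So assume $n\geq3$ and write $N:=n-1\geq 2$.

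\emph{Step 1 (mean value theorem).} The function $x\mapsto x^p$ has derivative $px^{p-1}$, which is decreasing on $(0,\infty)$. Hence
\[ n^p-(n-1)^p = p\,\xi^{p-1}\leq p\,(n-1)^{p-1} \quad\text{for some } \xi\in(n-1,n). \]
Raising this to the power $q$ and using $(p-1)q=-1$, we get
\[ \bigl(n^p-(n-1)^p\bigr)^{q}\leq p^q N^{-1}. \]
Consequently the left-hand side of the lemma is at most
\[ 1+p^q\,\frac{N-1}{N}. \]

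\emph{Step 2 (lower bound for the right-hand side).} The right-hand side equals $N^{pq}=e^{pq\ln N}$. By $e^z\geq 1+z$,
\[ N^{pq}\geq 1+pq\ln N. \]
It therefore suffices to show
\[ p^q\,\frac{N-1}{N}\leq pq\ln N, \qquad\text{that is,}\qquad p^{q-1}\,\frac{N-1}{N}\leq q\ln N. \]

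\emph{Step 3 (elementary comparison).} Since $0<p<1$ and $q-1>0$, we have $p^{q-1}\leq 1$. Also $q\geq1$. So it is enough to verify $(N-1)/N\leq \ln N$ for all integers $N\geq2$.
\begin{itemize}
\item For $N=2$ this reads $1/2\leq\ln 2\approx0.693$, which holds.
\item For $N\geq3$ we have $(N-1)/N<1<\ln 3\leq\ln N$.
\end{itemize}
This completes the argument.

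The main obstacle is choosing estimates sharp enough near the delicate regimes. A cruder route, for instance replacing $(N-1)/N$ by $1$ and $\ln N$ by $\ln 2$, fails as $p\to0$: there $p^{q-1}\to1$ and $q\to 1$, so one needs exactly $(N-1)/N\leq\ln N$. Keeping the factor $(n-2)/(n-1)$ from the mean value bound is what handles $n=3$. If any step turned out too lossy, the fallback would be to keep the exact expression $N^{pq}-1\geq pq\ln N$ and compare it directly, case by case, for $N=2$.
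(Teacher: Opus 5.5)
Your proof is correct, and its first step is the paper's. Both use $n^p-(n-1)^p\le p(n-1)^{p-1}$: the paper gets it from $\int_{n-1}^n p x^{p-1}\,\textup{d}x$, you from the mean value theorem.

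The routes then diverge:
\begin{itemize}
\item The paper clears denominators to reach $p^{1/(1-p)}(n-2)+n-1\le (n-1)^{1/(1-p)}$. It treats this as $F_p(x)\ge 0$ for the real function $F_p(x)=(x-1)^{1/(1-p)}-p^{1/(1-p)}(x-2)-x+1$ on $[2,\infty)$ and notes $F_p(2)=0$. It then proves $F_p'>0$ using $(x-1)^{p/(1-p)}>1$ and $p^{1/(1-p)}<p/(1-p)$, the latter derived from positivity of the binary Shannon entropy.
\item You instead linearize the right-hand side with $e^z\ge 1+z$. You finish with the crude bounds $p^{q-1}\le 1\le q$ and $\ln N\ge 1-1/N$.
\end{itemize}

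Your argument needs no differentiation and is a bit shorter. It also shows that the entropy step is stronger than necessary: the trivial chain $p^{1/(1-p)}<p<p/(1-p)$ would already make the paper's $F_p'$ positive.

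Two small remarks:
\begin{itemize}
\item Your case split $N=2$ versus $N\ge 3$ is unnecessary, since $\ln N\ge 1-1/N$ holds for every real $N>0$. So, like the paper's, your argument works for all real $n\ge 2$.
\item The paper's monotonicity framing has one advantage: it is the same template the paper reuses for the companion Lemma~\ref{lm:lemma2}.
\end{itemize}
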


\begin{proof}[Proof of Lemma \ref{lm:lemma1}]
Using
\begin{equation}\label{eq:simpleineq}
n^p-(n-1)^p = \int_{n-1}^n \frac{p}{x^{1-p}} \,\textup{d}x \leq \frac{p}{(n-1)^{1-p}}, 
\end{equation}
the inequality will follow from
\[ (n-2) \Bigl(\frac{p}{(n-1)^{1-p}}\Bigr)^{1/(1-p)} + 1 \leq (n-1)^{p/(1-p)}, \]
which simplifies as
\[ p^{1/(1-p)} (n-2) + n-1 \leq (n-1)^{1/(1-p)}. \]
However, this is just
\begin{equation}\label{eq:aux_1}
F_p(n) \geq 0,
\end{equation}
where $F_p\colon[2,\infty)\to\R$ is defined by
\[ F_p(x) = (x-1)^{1/(1-p)} - p^{1/(1-p)} (x-2) - x+1 \]
for a fixed $0<p<1$.
For $x>2$ we have
\[ F_p'(x) = \frac{1}{1-p} (x-1)^{p/(1-p)} - p^{1/(1-p)} - 1
> \frac{p}{1-p} - p^{1/(1-p)}. \]
Positivity of the last expression is equivalent to
\[ -(1-p)\log(1-p) -p\log p > 0, \]
which is easily seen to hold, since its left-hand side is the Shannon entropy of the Bernoulli distribution (i.e., tossing of an unfair coin), which is known to be positive.
Consequently, $F_p$ is strictly increasing on $[2,\infty)$ and \eqref{eq:aux_1} follows from $F_p(2) = 0$.
\end{proof}

\begin{lemma}\label{lm:lemma2}
For an integer $n\geq 2$ and a real number $p\in(0,1)$ one has
\[ \bigl(n^p-(n-1)^p\bigr)^{1/(1-p)} + 3^{-p/(1-p)} \bigl((n-2)n^p-(n-3)(n-1)^p\bigr)^{1/(1-p)} \leq (n-1)^{p/(1-p)}. \]
\end{lemma}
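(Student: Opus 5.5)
Divide both sides by $(n-1)^{p/(1-p)}$ and set $q=1/(1-p)>1$ and
\[ \lambda := \Bigl(\frac{n}{n-1}\Bigr)^p - 1 = \frac{n^p-(n-1)^p}{(n-1)^p}. \]
Since $(n-2)n^p-(n-3)(n-1)^p = (n-1)^p + (n-2)\bigl(n^p-(n-1)^p\bigr)$, the claimed inequality becomes
\[ \lambda^q + 3^{-pq}\bigl(1+(n-2)\lambda\bigr)^q \leq 1 . \]
The left-hand side is increasing in $\lambda$. So I would first replace $\lambda$ by an upper bound. Estimate \eqref{eq:simpleineq} gives $\lambda \leq p/(n-1)$. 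Writing $c_n := (n-2)/(n-1)\in[0,1)$, it then suffices to prove
\[ \Bigl(\frac{p}{n-1}\Bigr)^q + \Bigl(\frac{1+c_n p}{3^p}\Bigr)^q \leq 1 . \]

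For the second term I would use $1+c_np\leq e^{c_np}$. This bounds that term by $\exp\bigl(-pq(\log 3 - c_n)\bigr)$, where $\log 3 - c_n \geq \log 3 - 1 > 0$. The goal is therefore
\[ \Bigl(\frac{p}{n-1}\Bigr)^{q} \leq 1 - e^{-pq\delta_n},\qquad \delta_n:=\log 3 - \frac{n-2}{n-1}. \]
For the right-hand side I would use $1-e^{-z}\geq z/(1+z)$, or $1-e^{-z}\ge z e^{-z}$, with $z=pq\delta_n$. For the left-hand side, since $p<1$ and $q\ge 1$, we have $(p/(n-1))^q \leq (p/(n-1))\cdot p^{q-1}$, which is linear in $p$ times a factor at most $1$.

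The comparison is then between a quantity of order $p/(n-1)$ and one of order $p\,\delta_n$, and I would split into two regimes.
\begin{itemize}
\item For large $n$ (roughly $n\geq 12$), $1/(n-1) < \log 3 - 1 \le \delta_n$, so the bound should close uniformly in $p\in(0,1)$. As $p\to 1$ both sides behave well, because $q\to\infty$ makes $(p/(n-1))^q\to 0$.
\item For small $n$, meaning $n=2$ separately and then $3\le n\le 11$, $\delta_n$ is noticeably larger: $\delta_2=\log 3$ and $\delta_3=\log 3-\tfrac12$. Here I would check $\tfrac{1}{n-1}\,p^{q-1}\leq \delta_n/(1+pq\delta_n)$ directly for each $n$, as a one-variable inequality in $p$.
\end{itemize}

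The main obstacle I expect is the intermediate range: small-to-moderate $n$ with $p$ neither near $0$ nor near $1$. There the crude bounds $\lambda\le p/(n-1)$ and $1+c_np\le e^{c_np}$ may lose too much, and the margin may disappear. If that happens, my fallback is to keep the exact $\lambda=(n/(n-1))^p-1$ for the finitely many small $n$. One would then verify the resulting single-variable inequality in $p$ on $(0,1)$, either by monotonicity or convexity arguments in $p$ or by a rigorous interval check. Only the large-$n$ tail would be handled by the asymptotic argument above.
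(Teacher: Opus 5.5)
Your reduction is correct, and it is the same as the paper's first step. After $\lambda\le p/(n-1)$, your inequality $\bigl(\frac{p}{n-1}\bigr)^q+\bigl(\frac{1+c_np}{3^p}\bigr)^q\le1$ is exactly the paper's $p^{q}+\bigl(\frac{pn+n-2p-1}{3^p}\bigr)^{q}\le(n-1)^{q}$ divided by $(n-1)^q$. The gap is in how you close it.

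The condition you propose to check for $2\le n\le 11$, namely $\frac{1}{n-1}p^{q-1}\le\delta_n/(1+pq\delta_n)$, has lost a factor $q$: dividing $z/(1+z)$, $z=pq\delta_n$, by $p$ gives $q\delta_n/(1+pq\delta_n)$. As written, the condition is false for every $n$ once $p$ is close to $1$. Its right-hand side tends to $0$, while $p^{q-1}=p^{p/(1-p)}\to e^{-1}$; for instance, $n=2$, $p=0.9$ would require $0.387\le 0.101$.

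Your fallback for small $n$ is only a plan, not a proof. Moreover, no rigorous interval check can settle a neighbourhood of $p=0$ by itself. There the normalized left-hand side $\lambda^q+3^{-pq}(1+(n-2)\lambda)^q$ tends to $1$, so the margin tends to $0$. As it stands, the cases $2\le n\le 11$, which you yourself flag as the main obstacle, remain open.

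The missing observation is that no case split is needed. You only used $\delta_n\ge\log 3-1$, but in fact
\[
\delta_n-\tfrac{1}{n-1}=\log 3-\tfrac{n-2}{n-1}-\tfrac{1}{n-1}=\log 3-1>0
\]
for every $n\ge 2$. Write $t=pq=p/(1-p)$, so that $q=1+t$ and $q-1=t$. The correct sufficient condition is then $\frac{p^{t}}{n-1}\le\frac{(1+t)\delta_n}{1+t\delta_n}$. The right-hand side is monotone in $t$, going from $\delta_n$ at $t=0$ to $1$ as $t\to\infty$. Hence it is at least $\min\{\delta_n,1\}\ge\frac{1}{n-1}$, while the left-hand side is $<\frac{1}{n-1}$. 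With this repair your route works for all $n\ge2$ and $p\in(0,1)$ at once.

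The repaired argument is a genuine alternative to the paper's. The paper shows that $G_p(x)=(x-1)^{q}-\bigl(\frac{px+x-2p-1}{3^p}\bigr)^{q}-p^{q}$ is increasing on $[2,\infty)$, using $px+x-2p-1\le(1+p)(x-1)$ and $1+p<3^p$. This reduces everything to $n=2$, which it finishes with $3^{-p/(1-p)}<e^{-p/(1-p)}<1-p$ and $p^{1/(1-p)}<p$. That last step is essentially your bound $e^{-z}\le 1/(1+z)$ at $n=2$. Your version replaces the derivative computation in $x$ with $1+c_np\le e^{c_np}$ and the identity $c_n+\frac{1}{n-1}=1$.
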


\begin{proof}[Proof of Lemma \ref{lm:lemma2}]
Recalling \eqref{eq:simpleineq}, which also gives
\begin{align*} 
(n-2)n^p-(n-3)(n-1)^p
& = (n-2) \bigl(n^p-(n-1)^p\bigr) + (n-1)^p \\
& \leq (n-2)\frac{p}{(n-1)^{1-p}} + (n-1)^p 
= \frac{p(n-2)+n-1}{(n-1)^{1-p}}, 
\end{align*}
we see that the desired inequality will follow from
\[ \Bigl(\frac{p}{(n-1)^{1-p}}\Bigr)^{1/(1-p)} + 3^{-p/(1-p)} \Bigl(\frac{p(n-2)+n-1}{(n-1)^{1-p}}\Bigr)^{1/(1-p)} \leq (n-1)^{p/(1-p)}, \]
which simplifies as
\[ p^{1/(1-p)} + \Bigl(\frac{pn+n-2p-1}{3^p}\Bigr)^{1/(1-p)} \leq (n-1)^{1/(1-p)}. \]
This can be rewritten as
\begin{equation}\label{eq:aux_2}
G_p(n) \geq 0,
\end{equation}
where $G_p\colon[2,\infty)\to\R$ is defined by
\[ G_p(x) = (x-1)^{1/(1-p)} - \Bigl(\frac{px+x-2p-1}{3^p}\Bigr)^{1/(1-p)} - p^{1/(1-p)} \]
for a fixed $0<p<1$.
When $x>2$, we have
\begin{align*} 
G_p'(x) & = \frac{1}{1-p} (x-1)^{p/(1-p)} - \frac{1}{1-p} \frac{1+p}{3^p} \Bigl(\frac{px+x-2p-1}{3^p}\Bigr)^{p/(1-p)} \\
& \geq \frac{1}{1-p} (x-1)^{p/(1-p)} - \frac{1}{1-p} \frac{1+p}{3^p} \Bigl(\frac{1+p}{3^p}(x-1)\Bigr)^{p/(1-p)} \\
& = \frac{1}{1-p} \biggl(1 - \Bigl(\frac{1+p}{3^p}\Bigr)^{1/(1-p)}\biggr) (x-1)^{p/(1-p)} > 0,
\end{align*}
because
\[ 3^p > e^p > 1+p \]
for $p\in(0,1)$.
Therefore $G_p$ is strictly increasing on $[2,\infty)$ and, in order to verify \eqref{eq:aux_2}, we only still need to check
\[ G_p(2) = 1 - 3^{-p/(1-p)} - p^{1/(1-p)} > 0. \]
This very last inequality immediately follows from
\[ 3^{-p/(1-p)} < e^{-p/(1-p)} = \frac{1}{e^{p/(1-p)}} < \frac{1}{1 + p/(1-p)} = 1-p \]
and
\[ p^{1/(1-p)} < p, \]
so we are done.
\end{proof}

This completes the proof of Proposition \ref{prop:ineq} and thus also that of Theorem \ref{thm:complete}.


\section{Exhaustive study of graphs on \texorpdfstring{$4$}{4} vertices}
\label{sec:4vertices}
We have already commented that $\mathbf{C}_{G}$ is known when $G$ is $K_4$ or $S_4$.
Our goal is to prove Theorem \ref{thm:4vertices} by listing and studying all remaining connected graphs on $4$ vertices (up to graph isomorphisms).

\subsection{Cycle graph \texorpdfstring{$C_4$}{C4}}
If the vertices of $C_4$ are listed as in the left half of Figure \ref{fig:fig2} and we denote $x_i=|f(a_i)|$, then
\[ M_{C_4} f(a_i) = \max\Big\{ x_i, \frac{1}{3}(x_{i-1}+x_i+x_{i+1}), \frac{1}{4}(x_1+x_2+x_3+x_4) \Big\} \]
for $i=1,2,3,4$, where the subtraction/addition of indices is understood modulo $4$, with values in the set of representatives $\{1,2,3,4\}$.
The problem translates to maximizing the objective function
\[ (x_1,x_2,x_3,x_4) \mapsto \sum_{i=1}^{4} |M_{C_4} f(a_i)-M_{C_4} f(a_{i+1})| \]
subject to the constraints
\[ x_1, x_2, x_3, x_4\geq 0, \quad |x_1-x_2| + |x_2-x_3| + |x_3-x_4| + |x_4-x_1| = 1. \]
One could now use the Mathematica command \verb+Maximize+ (and it would confirm the desired result), but we would not be able to rely on that output with absolute certainty, since the objective function is not globally (but only piecewise) a polynomial.

It is thus natural to split the feasible region into subdomains on which both the objective function is linear, and the constraints become linear inequalities/equalities.
By observing the symmetries of $C_4$, we split the problem into the following $9$ linear optimization sub-problems. Each of them is solved autonomously and rigorously using the Mathematica command \verb+LinearOptimization+.

\begin{flushleft}
Maximize $(3 x_1-x_2-x_3-x_4)/2$ subject to $x_1\geq x_2\geq x_3\geq x_4$, $x_1+x_3+x_4\leq 3 x_2$, $2 x_1=1+2 x_4$ returns $2/3$.
\end{flushleft}

\begin{flushleft}
Maximize $(3 x_1-x_2-x_3-x_4)/2$ subject to $x_1\geq x_2\geq x_3\geq x_4$, $x_1+x_3+x_4\geq 3 x_2$, $2 x_1=1+2 x_4$ returns $3/4$.
\end{flushleft}

\begin{flushleft}
Maximize $(3 x_1-x_2-x_3-x_4)/2$ subject to $x_1\geq x_2\geq x_4\geq x_3$, $2 x_1=1+2 x_3$ returns $3/4$.
\end{flushleft}

\begin{flushleft}
Maximize $(3 x_1-x_2-x_3-x_4)/2$ subject to $x_1\geq x_3\geq x_2\geq x_4$, $3 x_3\leq x_1+x_2+x_4$, $2 (x_1-x_2+x_3-x_4)=1$ returns $3/4$.
\end{flushleft}

\begin{flushleft}
Maximize $2 (x_1-x_3)$ subject to $x_1\geq x_3\geq x_2\geq x_4$, $3 x_3\geq x_1+x_2+x_4$, $x_1+x_4\geq 2 x_3$, $2 (x_1-x_2+x_3-x_4)=1$ returns $1/2$.
\end{flushleft}

\begin{flushleft}
Maximize $(5 x_1-7 x_2+5 x_3-3 x_4)/6$ subject to $x_1\geq x_3\geq x_2\geq x_4$, $3 x_3\geq x_1+x_2+x_4$, $x_1+x_4\leq 2 x_3$, $x_1+x_2\leq 2 x_3$, $x_1+x_3+x_4\leq 3 x_2$, $2 (x_1-x_2+x_3-x_4)=1$ returns $1/3$.
\end{flushleft}

\begin{flushleft}
Maximize $2(x_1-x_2+x_3-x_4)/3$ subject to $x_1\geq x_3\geq x_2\geq x_4$, $3 x_3\geq x_1+x_2+x_4$, $x_1+x_4\leq 2 x_3$, $x_1+x_2\leq 2 x_3$, $x_1+x_3+x_4\geq 3 x_2$, $2 (x_1-x_2+x_3-x_4)=1$ returns $1/3$.
\end{flushleft}

\begin{flushleft}
Maximize $(3 x_1-x_2-x_3-x_4)/2$ subject to $x_1\geq x_3\geq x_2\geq x_4$, $3 x_3\geq x_1+x_2+x_4$, $x_1+x_4\leq 2 x_3$, $x_1+x_2\geq 2 x_3$, $x_1+x_3+x_4\leq 3 x_2$, $2 (x_1-x_2+x_3-x_4)=1$ returns $1/2$.
\end{flushleft}

\begin{flushleft}
Maximize $2(2 x_1-x_3-x_4)/3$ subject to $x_1\geq x_3\geq x_2\geq x_4$, $3 x_3\geq x_1+x_2+x_4$, $x_1+x_4\leq 2 x_3$, $x_1+x_2\geq 2 x_3$, $x_1+x_3+x_4\geq 3 x_2$, $2 (x_1-x_2+x_3-x_4)=1$ returns $1/2$.
\end{flushleft}

Therefore
\[ \mathbf{C}_{C_4} = \max \Bigl\{\frac{1}{3},\frac{1}{2},\frac{2}{3},\frac{3}{4}\Bigr\} = \frac{3}{4}. \]
The sharp constant $3/4$ is realized, for instance, when
$x_1=1/2$, $x_2=x_3=x_4=0$.

Note that the splitting into the above linear problems is not minimal: some subproblems (like the first two) can obviously be merged. We were, in fact, manually adding new constraints that split into subproblems whenever Mathematica needed help with simplification of either the objective function or the constraint involving absolute values. Minimality was traded for simplicity. 

\subsection{Path graph \texorpdfstring{$P_4$}{P4}}
If we denote the vertices of $P_4$ as $V=\{a_1,a_2,a_3,a_4\}$, as in the right half of Figure \ref{fig:fig2}, and if we denote $x_i=|f(a_i)|$ for $i=1,2,3,4$, then
{\allowdisplaybreaks\begin{align*}
M_{P_4} f(a_1) & = \max\Big\{ x_1, \frac{1}{2}(x_1+x_2), \frac{1}{3}(x_1+x_2+x_3), \frac{1}{4}(x_1+x_2+x_3+x_4) \Big\}, \\
M_{P_4} f(a_2) & = \max\Big\{ x_2, \frac{1}{3}(x_1+x_2+x_3), \frac{1}{4}(x_1+x_2+x_3+x_4) \Big\}, \\
M_{P_4} f(a_3) & = \max\Big\{ x_3, \frac{1}{3}(x_2+x_3+x_4), \frac{1}{4}(x_1+x_2+x_3+x_4) \Big\}, \\
M_{P_4} f(a_4) & = \max\Big\{ x_4, \frac{1}{2}(x_3+x_4), \frac{1}{3}(x_2+x_3+x_4), \frac{1}{4}(x_1+x_2+x_3+x_4) \Big\}.
\end{align*}}
The objective function is now
\[ (x_1,x_2,x_3,x_4) \mapsto |M_{P_4} f(a_1)-M_{P_4} f(a_2)| + |M_{P_4} f(a_2)-M_{P_4} f(a_3)| + |M_{P_4} f(a_3)-M_{P_4} f(a_4)|, \]
while the constraints are
\[ x_1, x_2, x_3, x_4\geq 0, \quad |x_1-x_2| + |x_2-x_3| + |x_3-x_4| = 1. \]
The same procedure as before splits the problem into $32$ linear optimization subproblems, each of which is easily automatically solved using Mathematica.
In the end, we obtain
$\mathbf{C}_{P_4} = 3/4$.
It is also easy to see that this constant is attained for instance when
$x_1=1$, $x_2=x_3=x_4=0$.

\subsection{Paw graph \texorpdfstring{$Y$}{Y}}
This time, recalling the left half of Figure \ref{fig:fig3},
{\allowdisplaybreaks\begin{align*}
M_{Y} f(a_1) & = \max\Big\{ x_1, \frac{1}{4}(x_1+x_2+x_3+x_4) \Big\}, \\
M_{Y} f(a_2) & = \max\Big\{ x_2, \frac{1}{2}(x_1+x_2), \frac{1}{4}(x_1+x_2+x_3+x_4) \Big\}, \\
M_{Y} f(a_3) & = \max\Big\{ x_3, \frac{1}{3}(x_1+x_3+x_4), \frac{1}{4}(x_1+x_2+x_3+x_4) \Big\}, \\
M_{Y} f(a_4) & = \max\Big\{ x_4, \frac{1}{3}(x_1+x_3+x_4), \frac{1}{4}(x_1+x_2+x_3+x_4) \Big\}.
\end{align*}}
In the same way as before, we maximize $\textup{Var}M_Y f$ with respect to 
\[ x_1, x_2, x_3, x_4\geq 0, \quad |x_1-x_2| + |x_1-x_3| + |x_1-x_4| + |x_3-x_4| = 1. \]
By splitting this task into $29$ simple linear optimization subproblems and solving every one of them, we obtain
$\mathbf{C}_{Y} = 3/4$,
as announced.
One extremizer is
$x_2=1$, $x_1=x_3=x_4=0$.

\subsection{Diamond graph \texorpdfstring{$D$}{D}}
From the right half of Figure \ref{fig:fig3} we can read off
{\allowdisplaybreaks\begin{align*}
M_{D} f(a_1) & = \max\Big\{ x_1, \frac{1}{3}(x_1+x_2+x_4), \frac{1}{4}(x_1+x_2+x_3+x_4) \Big\}, \\
M_{D} f(a_2) & = \max\Big\{ x_2, \frac{1}{4}(x_1+x_2+x_3+x_4) \Big\}, \\
M_{D} f(a_3) & = \max\Big\{ x_3, \frac{1}{3}(x_2+x_3+x_4), \frac{1}{4}(x_1+x_2+x_3+x_4) \Big\}, \\
M_{D} f(a_4) & = \max\Big\{ x_4, \frac{1}{4}(x_1+x_2+x_3+x_4) \Big\}
\end{align*}}
and constraints
\[ x_1, x_2, x_3, x_4\geq 0, \quad |x_1-x_2| + |x_1-x_4| + |x_2-x_3| + |x_2-x_4| + |x_3-x_4| = 1. \]
Following the same procedure one more time, i.e., splitting into $18$ linear optimization subproblems, we get
$\mathbf{C}_{D} = 3/4$
and an extremizer is
$x_2=1/3$, $x_1=x_3=x_4=0$.


\section{Exhaustive study of graphs on \texorpdfstring{$5$}{5} vertices}
\label{sec:5vertices}

\begin{figure}
\includegraphics[width=0.18\linewidth]{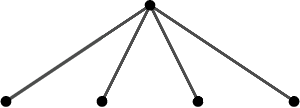}\hspace*{4mm}
\includegraphics[width=0.18\linewidth]{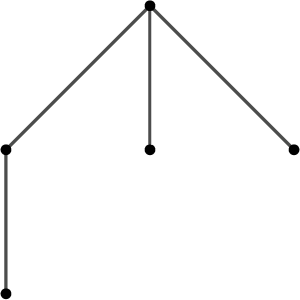}\hspace*{4mm}
\includegraphics[width=0.18\linewidth]{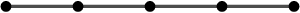}\\[2mm]
\includegraphics[width=0.18\linewidth]{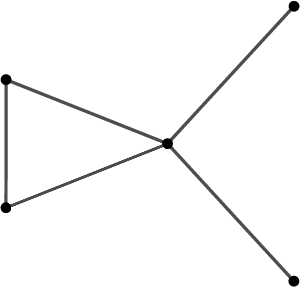}\hspace*{4mm}
\includegraphics[width=0.18\linewidth]{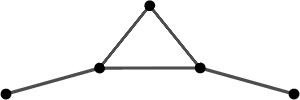}\hspace*{4mm}
\includegraphics[width=0.18\linewidth]{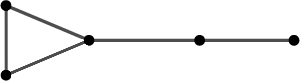}\\[2mm]
\includegraphics[width=0.18\linewidth]{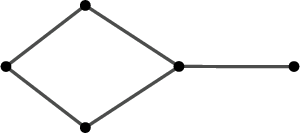}\hspace*{4mm}
\includegraphics[width=0.18\linewidth]{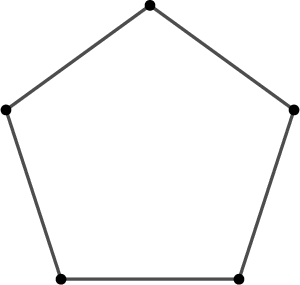}\hspace*{4mm}
\includegraphics[width=0.18\linewidth]{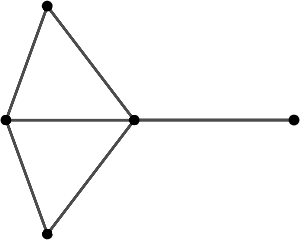}\\[2mm]
\includegraphics[width=0.18\linewidth]{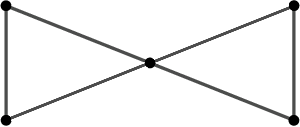}\hspace*{4mm}
\includegraphics[width=0.18\linewidth]{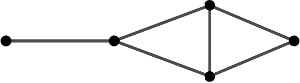}\hspace*{4mm}
\includegraphics[width=0.18\linewidth]{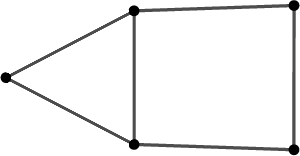}\\[2mm]
\includegraphics[width=0.18\linewidth]{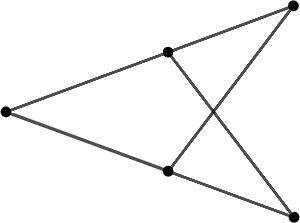}\hspace*{4mm}
\includegraphics[width=0.18\linewidth]{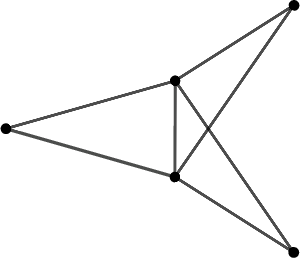}\hspace*{4mm}
\includegraphics[width=0.18\linewidth]{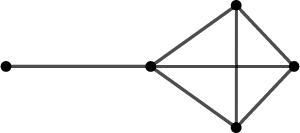}\\[2mm]
\includegraphics[width=0.18\linewidth]{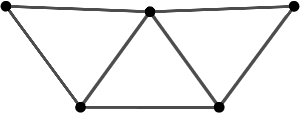}\hspace*{4mm}
\includegraphics[width=0.18\linewidth]{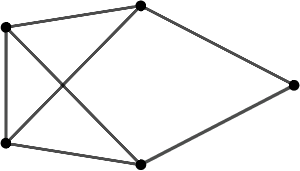}\hspace*{4mm}
\includegraphics[width=0.18\linewidth]{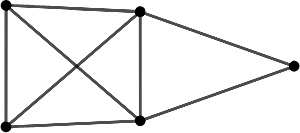}\\[2mm]
\includegraphics[width=0.18\linewidth]{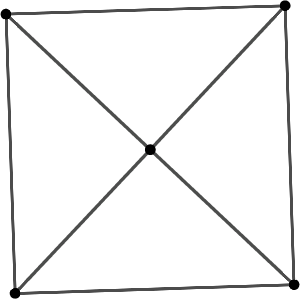}\hspace*{4mm}
\includegraphics[width=0.18\linewidth]{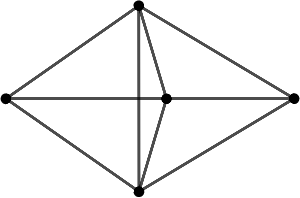}\hspace*{4mm}
\includegraphics[width=0.18\linewidth]{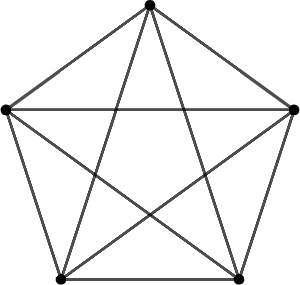}
\caption{Connected graphs on $5$ vertices.}
\label{fig:fig4}
\end{figure}

Our goal is to give strong supportive evidence for Observation \ref{obs:5vertices}.
There are $21$ isomorphism classes of connected graphs on a set of $5$ vertices, depicted in Figure \ref{fig:fig4}. For each of them the Mathematica command \verb+Maximize+ gives a somewhat reliable answer to the underlying constrained optimization problem:

\begin{flushleft}
Maximize $\textup{Var}M_G f$ subject to $f\geq0$, $\textup{Var}f=1$.
\end{flushleft}

When Figure \ref{fig:fig4} is viewed row-wise and from left to right, the optimal constants $\mathbf{C}_G$ of the depicted graphs evaluate respectively as
\[ \frac{4}{5},\frac{4}{5},\frac{4}{5},\frac{4}{5},\frac{17}{20},\frac{4}{5},\frac{17}{20},\frac{4}{5},\frac{33}{40},\frac{4}{5},\frac{17}{20},\frac{33}{40},\frac{17}{20},\frac{4}{5},\frac{4}{5},\frac{33}{40},\frac{17}{20},\frac{49}{60},\frac{5}{6},\frac{33}{40},\frac{4}{5}. \]
We see that the only possible values for $\mathbf{C}_G$ are precisely those claimed before.


\section{Proof of Theorem \ref{thm:largeconst} on large variational constants}

\begin{figure}
\includegraphics[width=0.54\linewidth]{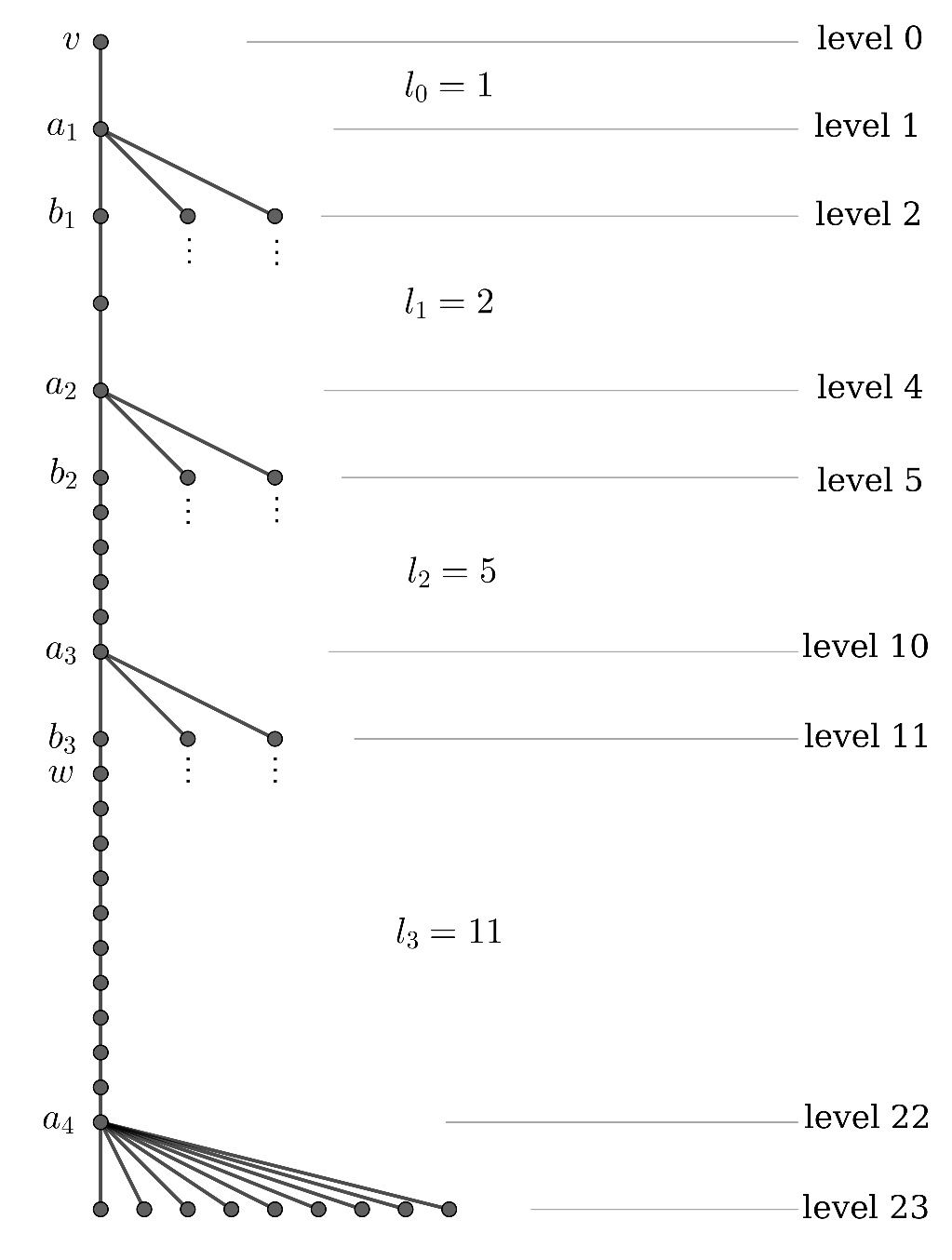}
\caption{Examples of graphs with large $\mathbf{C}_{G,p}$.}
\label{fig:fig5}
\end{figure}

Let $(l_i)_{i=0}^{\infty}$ be the sequence of positive integers defined recursively as
\[ \begin{cases}
& l_0:=1, \\
& l_i := i + \sum_{j=0}^{i-1} l_j \quad\text{for } i\geq1.
\end{cases} \]
Its first several terms are $1,2,5,11,23$, and, in general, by induction we have $l_{n+1}=2l_n+1$, and $l_n=3\cdot2^{n-1}-1$ for all $n\geq 1$. Also fix integers $k\geq2$ and $m>p$.
We will construct $G=(V,E)$ as a rooted tree, with root $v$ and all vertices appearing on levels $0,1,2,3,\ldots,l_{m+1}$. The root vertex $v$ is the only one on level $0$. Precisely on levels labeled $l_i-1$ for $i=1,2,\ldots,m$ every vertex branches into $k$ children, on the level $l_{m+1}-1$ every vertex branches into $k^2$ children, while vertices on all other levels have precisely one offspring each, except for the leaves on the last level.

In other words, the only vertex hanging from the root $v$ is marked as $a_1$, which then branches into $k$ vertices, one of which is denoted by $b_1$. Then we attach to $b_1$ a path of length $l_1=2$ ending with some vertex $a_2$, which then branches into $k$ new vertices, one of which we denote $b_2$. We proceed analogously with every vertex on the same level as $b_1$. Next, to $b_2$ we attach a path of length $l_2=5$ ending in $a_3$, which then branches into $k$ vertices, one of which we call $b_3$, etc. After we finally define the vertices $a_m$ and $b_m$, we attach to $b_m$ a path of length $l_m$ ending in $a_{m+1}$, which branches into $k^2$ leaves of the tree. The vertex immediately under $b_m$ is named $w$. We apply a similar construction to every vertex on the same level as $b_m$. An example of this construction with $k=m=3$ is depicted in Figure \ref{fig:fig5}.

Observe that the ball at $b_m$ of radius $d_G(b_m,v)=d_G(b_m,a_{m+1})$ contains at most $k$ vertices on every tree level and so its cardinality is at most $C_m k$ for some constant $C_m$ depending only on $m$.
On the other hand, the vertex $w$ has more than $k^2$ vertices at distance at most $d(w,v)$; these are the tree leaves.
Thus, taking a function $f\colon V\to\R$ which is $1$ at the root $v$ and $0$ elsewhere, we get
\[ \textup{Var}_p f = 1 \]
and
\[ M_{G}f(b_m) \geq \frac{1}{C_m k},\quad M_{G}f(w) \leq \frac{1}{k^2}. \]
Since there are $k^m$ vertices at the same level as $b_m$, we need to add $k^m$ contributions to the $p$-variation along the edges ``parallel'' to $b_m w$. Consequently,
\[ \textup{Var}_p M_G f \geq \bigl(k^m\bigr)^{1/p} \Bigl(\frac{1}{C_m k}-\frac{1}{k^2}\Bigr), \]
which is, up to a constant, asymptotically equal to $k^{m/p-1}$. It becomes arbitrarily large as $k$ grows to infinity for a fixed $m>p$, making $\mathbf{C}_{G,p}$ as large as needed.
This completes the proof of Theorem \ref{thm:largeconst}.


\section*{Acknowledgments}
C.\,G-R. was supported by the Spanish Ministry of Science, Innovation and Universities, grant PID2023-150984NB-I00. C.\,G-R. is also supported by the Spanish State Research Agency, through the Severo Ochoa and María de Maeztu Program for Centers and Units of Excellence in R\&D (CEX2020-001084-M) and thanks CERCA Programme/Generalitat de Catalunya for institutional support. 
V.\,K. was supported in part by the Croatian Science Foundation under the project HRZZ-IP-2022-10-5116 (FANAP). V.\,K. was also supported in part by the European Union -- NextGenerationEU through the National Recovery and Resilience Plan 2021--2026; institutional grant of University of Zagreb Faculty of Science IK IA 1.1.3. Impact4Math. 
J.\,M. was partially supported by the AMS Stefan Bergman Fellowship and the Simons Foundation Grant $\# 453576$.


\bibliography{SharpVariationGraphs}{}
\bibliographystyle{plainurl}

\end{document}